\newtheorem{theorem}{Theorem}[section]
\newtheorem{proposition}{Proposition}[section]
\newtheorem{lemma}{Lemma}[section]
\newtheorem{remark}{Remark}[section]
\theoremstyle{remark}
\newcommand{\im}{\text{\rm Im }}
\begin{document}
%
%
%
%
%
%
%
%
%
\title[Singular and tangent slit solutions...]{Singular and tangent slit solutions to the
L\"owner equation}
\author{Dmitri Prokhorov}

\address{%
Department of Mathematics and Mechanics,\\
Saratov State University,\\
Astrakhanskay Str. 83,\\
410012, Saratov\\
Russia}

\email{prokhorov@sgu.ru}

\author{Alexander Vasil'ev}

\address{%
Department of Mathematics,\\ University of Bergen, \\Johannes Brunsgate 12,
Bergen 5008,\\ Norway}

\email{alexander.vasiliev@math.uib.no}

\thanks{ The first author was partially supported by the Russian Foundation for Basic Research (grant 07-01-00120) and the second by the grants of the Norwegian Research Council \#177355/V30, and of  the
European Science Foundation RNP HCAA.}

\subjclass{Primary 30C35, 30C20; Secondary
30C62} 

\keywords{Univalent function, L\"owner equation, Slit map.}

\date{May 2, 2008}

\begin{abstract}
We consider the L\"owner differential equation generating univalent maps of the unit disk (or
of the upper half-plane) onto itself minus a single slit. We prove that the circular slits,
tangent to the real axis are generated by H\"older continuous driving terms with exponent 1/3
in the L\"owner equation. Singular solutions are  described, and the critical value of the
norm of driving terms generating quasisymmetric slits in the disk is obtained.
\end{abstract}
\maketitle

\section{Introduction}

Let $\mathbb{D}=\{z\in\mathbb{C}:\,\,|z|<1\}$ be the unit disk and $\mathbb T:=\partial
\mathbb D$. The famous L\"owner equation was introduced in 1923 \cite{Loewner} in order to
represent a dense subclass of the whole class of univalent conformal maps
$f(z)=z(1+c_1z+\dots)$ in $\mathbb D$ by the limit
\[
f(z)=\lim\limits_{t\to\infty}e^tw(z,t),\quad z\in \mathbb D,
\]
where $w(z,t)=e^{-t}z(1+c_1(t)z+\dots)$ is a solution to the equation
\begin{equation}
\frac{dw}{dt}=-w\frac{e^{iu(t)}+w}{e^{iu(t)}-w}, \quad w(z,0)\equiv z, \label{LE}
\end{equation}
with a continuous driving term $u(t)$ on $t\in [0,\infty)$, see \cite[page
117]{Loewner}. All functions $w(z,t)$ map $\mathbb D$ onto $\Omega(t)\subset \mathbb D$. If
$\Omega(t)= \mathbb D\setminus \gamma(t)$, where $\gamma(t)$ is a Jordan curve in $\mathbb D$ except one of its endpoints, then the
driving term $u(t)$ is uniquely defined and we call the corresponding map $w$ a {\it slit
map}. However, from 1947 \cite{Kufarev} it is known that  solutions to (\ref{LE}) with
continuous $u(t)$ may give non-slit maps, in particular, $\Omega(t)$ can be a family of
hyperbolically convex digons in $\mathbb D$.

Marshall and Rohde \cite{Marshall} addressed the following question: {\it Under which
condition on the driving term $u(t)$ the solution to (\ref{LE}) is a slit map?} Their result
states that if $u(t)$ is Lip(1/2) (H\"older continuous with  exponent 1/2), and if for a certain
constant $C_{\mathbb D}>0$, the norm $\|u\|_{1/2}$ is bounded $\|u\|_{1/2}<C_{\mathbb D}$, then the solution $w$ is a
slit map, and moreover, the Jordan arc $\gamma(t)$ is s quasislit (a quasiconformal image of an interval within a Stolz angle). As they also proved, a
converse statement without the norm restriction holds. The absence of the norm restriction in
the latter result is essential. On one hand, Kufarev's example \cite{Kufarev} contains
$\|u\|_{1/2}=3\sqrt{2}$, which means that $C_{\mathbb D}\leq 3\sqrt{2}$. On the other hand,
Kager, Nienhuis, and Kadanoff  \cite{Kadanoff} constructed exact slit solutions to the
half-plane version of the L\"owner equation with arbitrary norms of the driving term.

Let us give here the half-plane version of the L\"owner equation. Let $\mathbb H=\{z: \im
z>0\}$, $\mathbb R=\partial \mathbb H$. The functions $h(z,t)$, normalized near infinity by
$h(z,t)=z-2t/z+b_{-2}(t)/z^2+\dots$,  solving the equation
\begin{equation}
\frac{dh}{dt}=\frac{-2}{h-\lambda(t)}, \quad h(z,0)\equiv z, \label{LE2}
\end{equation}
where $\lambda(t)$ is a real-valued
continuous driving term, map $\mathbb H$ onto a subdomain of $\mathbb H$. The question about the slit mappings and the behaviour of the driving
term $\lambda(t)$ in the case of the half-plane $\mathbb H$ was addressed by Lind \cite{Lind}.
The techniques used by Marshall and Rohde carry over to prove a similar result in the case of
the equation (\ref{LE2}), see \cite[page 765]{Marshall}. Let us denote by $C_{\mathbb H}$ the
corresponding bound for the norm $\|\lambda\|_{1/2}$. The main result by Lind is the sharp
bound, namely $C_{\mathbb H}=4$.

In some papers, e.g., \cite{Kadanoff, Lind}, the authors work with  equations (\ref{LE},
\ref{LE2}) changing (--) to (+) in their right-hand sides, and with the mappings of slit domains
onto $\mathbb D$ or $\mathbb H$. However, the results remain the same for both versions.

Marshall and Rohde \cite{Marshall} remarked that there exist many examples of driving
terms $u(t)$ which are not Lip(1/2), but which generate slit solutions with simple arcs $\gamma(t)$. In
particular, if $\gamma(t)$ is tangent to $\mathbb T$, then $u(t)$ is never Lip(1/2).

Our result states that if $\gamma(t)$ is a circular arc tangent to $\mathbb R$, then the
driving term $\lambda(t)\in$Lip(1/3). Besides, we prove that $C_{\mathbb D}=C_{\mathbb H}=4$,
and consider properties of singular solutions to the one-slit L\"owner equation.

The authors are greateful for the referee's remarks which improved the presentation.

\section{Circular tangent slits}

We shall work with the half-plane version of the L\"owner equation and with the sign (+) in
the right-hand side, consequently with the maps of slit domains onto $\mathbb H$.

We construct a mapping of the half-plane $\mathbb H$ slit along a
circular arc $\gamma(t)$ of radius 1 centered on $i$ onto $\mathbb H$ starting at the origin
directed, for example, positively. The inverse mapping we denote by $z=f(w,t)=w-2t/w+\dots$.
Then $\zeta=1/f(w,t)$ maps $\mathbb H$ onto the lower half-plane slit along a ray co-directed
with $\mathbb R^+$ and having the distance 1/2 between them. Let $\zeta_0$ be the tip of this
ray. Applying the Christoffel-Schwarz formula we find $f$ in the form
\begin{equation}
\frac{1}{f(w,t)}=\int\limits_{0}^{1/w}\frac{(1-\gamma w)\, dw}{(1-\alpha w)^2(1-\beta w)}=
\frac{\beta-\gamma}{(\alpha-\beta)^2}\log\frac{w-\alpha}{w-\beta}+\frac{\alpha-\gamma}
{\alpha-\beta}\frac{1}{w-\alpha},\label{Eq3}
\end{equation} where the branch of logarithm vanishes at infinity, and $f(w,t)$ is expanded
near infinity as
\[
f(w,t)=w-\frac{2t}{w}+\dots
\]
The latter expansion gives us two conditions: there is no constant term and the
coefficient is $-2t$ at $w$,
which implies $\gamma=2\alpha+\beta$ and $\alpha(\alpha+2\beta)=-6t$.
The condition $\im \zeta_0=-1/2$
 yields
 \[
 \frac{-2\alpha}{(\alpha-\beta)^2}=\frac{1}{2\pi}.
 \]
Then, $\beta=\alpha+2\sqrt{-\alpha\pi}$, and $\alpha(3\alpha+4\sqrt{-\alpha\pi})=-6t$.
Considering the latter equation with respect to $\alpha$ we expand the solution $\alpha(t)$
in powers of $t^{1/3}$. Hence,
 \[
 \alpha(t)=-\left(\frac{9}{4\pi}\right)^{1/3}t^{2/3}+A_2t+A_3t^{4/3}+\dots
 \]
 and
 \[
 \beta(t)=(12\pi)^{1/3}t^{1/3}+B_2t^{2/3}+\dots
 \]
Formula (\ref{Eq3}) in the expansion form regarding to $1/w$  gives
\begin{equation}
\frac{\beta-\alpha}{2\pi}\;\frac{1}{w}+\frac{\beta^2-\alpha^2}{4\pi}\;\frac{1}{w^2}+\dots+
\left(1+2\frac{\alpha}{\beta}+2\frac{\alpha^2}{\beta^2}+\dots\right)\left(\frac{1}{w}+
\frac{\alpha}{w^2}+\dots\right)=\zeta.\label{Eq4}
\end{equation}
Remember that this formula is obtained under the conditions $\gamma=2\alpha+\beta$ and $(\alpha-\beta)^2=4\alpha\pi$.
We substitute the expansions  of $\alpha(t)$ and $\beta(t)$ in this formula and consider
it as an equation for the implicit  function $w=h(z,t)$. Calculating coefficients $B_2\dots B_4$ in terms of $A_2,\dots, A_4$, and verifying $A_2=-3/4\pi$ we come to the following expansion for $h(z,t)$: $$w=h(z,t)=h(\frac{1}{\zeta},t)=\frac{1}{\zeta}
+2\zeta t+\frac{3}{2}(12\pi)^{1/3}t^{4/3}+\dots.$$ 

This version of the L\"owner equation admits the form
\begin{equation}
\frac{dh}{dt}=\frac{2}{h-\lambda(t)}, \quad h(z,0)\equiv z.\label{Eq5}
\end{equation}
Being extended onto $\mathbb R\setminus \lambda(0)$ the function $h(z,t)$ satisfies the same
equation. Let us consider $h(z,t)$, $z\in \widehat{\mathbb H}\setminus \lambda(0)$ with a singular
point at $\lambda(0)$, where $\widehat{\mathbb H}$ is the closure of $\mathbb H$. Then
\[
\lambda(t)=h(z,t)-\frac{2}{dh(z,t)/dt}=\lambda(0)+(12\pi)^{1/3}t^{1/3}+\dots
\]
about the point  $t=0$.
Thus, the driving term $\lambda(t)$ is Lip(1/3) about the point $t=0$ and analytic
for the rest of the points $t$.

\begin{remark} The radius of the circumference is not essential for the properties of $\lambda(t)$.
Passing from $h(z,t)$ to the function $\frac{1}{r}h(rz,t)$ we recalculate the coefficients
of the function $h(z,t)$ and the corresponding coefficients in the expansion of $\lambda(t)$ that depend continuously
on $r$. Therefore,
they stay within bounded intervals whenever $r$ ranges within the bounded interval.
\end{remark}

\begin{remark} In particular, the expansion for $h(z,t)$ reflects the Marshall and Rohde's remark
\cite[page 765]{Marshall} that the tangent slits can not be generated by driving terms from Lip(1/2).
\end{remark}

\section{Singular solutions for slit images}

Suppose that the L\"owner equation (\ref{Eq5}) with driving term $\lambda(t)$ generates a map
$h(z,t)$ from $\Omega(t)=\mathbb H\setminus\gamma(t)$ onto $\mathbb H$, where $\gamma(t)$ is a
quasislit. Extending $h$ to the boundary $\partial\Omega(t)$ we obtain a
correspondence between $\gamma(t)\subset\partial\Omega(t)$ and a segment $I(t)\subset\mathbb
R$, while the remaining boundary part $\mathbb R=\partial\Omega(t)\setminus\gamma(t)$
corresponds to $\mathbb R\setminus I(t)$. The latter mapping is described by solutions to the
Cauchy problem for the differential equation (\ref{Eq5}) with the initial data
$h(x,0)=x\in\mathbb R\setminus\lambda(0)$. The set $\{h(x,t): x\in\mathbb
R\setminus\lambda(0)\}$ gives $\mathbb R\setminus I(t)$, and $\lambda(t)$ does not catch
$h(x,t)$ for all $t\geq0$, see \cite{Lind} for details.

The image $I(t)$ of $\gamma(t)$ can be also described by solutions $h(\lambda(0),t)$ to
(\ref{Eq5}), but the initial data $h(\lambda(0),0)=\lambda(0)$ forces $h$ to be singular at $t=0$
and to possess the following properties.

(i) There are two singular solutions $h^-(\lambda(0),t)$ and $h^+(\lambda(0),t)$ such that
$I(t)=[h^-(\lambda(0),t),h^+(\lambda(0),t)]$.

(ii) $h^{\pm}(\lambda(0),t)$ are continuous for $t\geq0$ and have continuous derivatives for
all $t>0$.

(iii) $h^-(\lambda(0),t)$ is strictly decreasing and $h^+(\lambda(0),t)$ is strictly increasing,
so that $h^-(\lambda(0),t)<\lambda(t)<h^+(\lambda(0),y)$.

We will focus on studying the singularity character of $h^{\pm}$ at $t=0$.

\begin{theorem} Let the L\"owner differential equation (\ref{Eq5}) with the driving term
$\lambda\in\text{Lip(1/2)}$, $\|\lambda\|_{1/2}=c$, generate slit maps $h(z,t): \mathbb
H\setminus\gamma(t)\to\mathbb H$ where $\gamma(t)$ is a quasislit. Then
$h^+(\lambda(0),t)$ satisfies the condition $$\lim_{t\to 0+}\sup\frac{h^+(\lambda(0),t)-
h^+(\lambda(0),0)}{\sqrt t}\leq\frac{c+\sqrt{c^2+16}}{2},$$ and this estimate is the best possible.
\end{theorem}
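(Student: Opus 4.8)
The plan is to reduce the problem to an ODE comparison argument for the function $h^+(\lambda(0),t)$. Write $g(t):=h^+(\lambda(0),t)$ and recall from property (iii) that $g(t)>\lambda(t)$ for $t>0$, so that $\delta(t):=g(t)-\lambda(t)>0$ solves $\delta'(t)=2/\delta(t)-\lambda'(t)$ wherever $\lambda$ is differentiable. Using $|\lambda(t)-\lambda(0)|\le c\sqrt t$ one controls $\lambda'$ only in an averaged sense, so the cleanest route is to integrate: $\tfrac12\delta(t)^2 - \tfrac12\delta(0^+)^2 = 2t - \int_0^t \delta(s)\lambda'(s)\,ds$, and then bound the last integral. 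Since $\delta(0^+)=0$ (the solution is singular, emanating from $\lambda(0)$), and $g(t)-g(0)=\delta(t)+(\lambda(t)-\lambda(0))$, an upper bound of the form $\delta(t)\le K\sqrt t$ immediately yields $\limsup_{t\to0+}(g(t)-g(0))/\sqrt t\le K+c$, and one expects the extremal $K$ to combine with $c$ to give $(c+\sqrt{c^2+16})/2$.

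The key computational step is to find the sharp constant $K$. I would look for the largest $a>0$ such that the ansatz $\delta(t)=a\sqrt t$ is consistent with the extremal driving term $\lambda(t)=\lambda(0)-c\sqrt t$ (the sign chosen to push $g$ upward as hard as possible, since then $-\lambda'(s)=c/(2\sqrt s)>0$). Plugging $\delta=a\sqrt t$ and $\lambda'=-c/(2\sqrt t)$ into $\delta'=2/\delta-\lambda'$ gives $a/(2\sqrt t)=2/(a\sqrt t)+c/(2\sqrt t)$, i.e. $a/2 = 2/a + c/2$, hence $a^2-ca-4=0$ and $a=(c+\sqrt{c^2+16})/2$. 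One then checks that along this extremal $g(t)-g(0)=\delta(t)+(\lambda(t)-\lambda(0))=a\sqrt t-c\sqrt t=\sqrt t\,(a-c)=\sqrt t\cdot\tfrac{\sqrt{c^2+16}-c}{2}$... so care is needed: the $\sqrt t$-coefficient of $g(t)-g(0)$ along this candidate is $a-c$, not $a$. Since $a-c=4/a=(\,\sqrt{c^2+16}-c\,)/2 \le a = (c+\sqrt{c^2+16})/2$, the bound in the theorem is still an upper bound; to realize equality one must instead choose $\lambda(t)-\lambda(0)=+c\sqrt t$ on a short initial interval so that $\lambda$ contributes $+c\sqrt t$ to $g(t)-g(0)$ while $\delta$ still grows like $a\sqrt t$ — one has to verify this combination is compatible with $\delta'=2/\delta-\lambda'$ and with $\lambda\in\mathrm{Lip}(1/2)$ with norm exactly $c$, at least asymptotically as $t\to0+$. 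This juggling of signs is where I expect the real subtlety to lie.

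For the upper-bound direction (the inequality itself, for an arbitrary admissible $\lambda$), I would argue by a differential-inequality/barrier comparison: suppose for contradiction that $\limsup_{t\to0+}\delta(t)/\sqrt t > a$ with $a$ the positive root of $a^2-ca-4=0$. Then along a sequence $t_n\to0$ we have $\delta(t_n)\ge (a+\varepsilon)\sqrt{t_n}$; I would compare $\delta$ with the explicit supersolution/subsolution associated with $\lambda(0)\pm c\sqrt t$ using the monotonicity of the right-hand side $2/\delta$ in $\delta$ and the constraint $\lambda(t)-\lambda(s)\le c\sqrt{|t-s|}$. Concretely, from $\delta(t)^2 = 4t - 2\int_0^t\delta(s)\lambda'(s)\,ds$ and the crude bound $-\int_0^t\delta(s)\lambda'(s)\,ds \le c\int_0^t \delta(s)\,\tfrac{ds}{2\sqrt s}$ (obtained after integrating by parts to move the derivative off $\lambda$, using $|\lambda(t)-\lambda(0)|\le c\sqrt t$), one gets an integral inequality $\delta(t)^2\le 4t + c\int_0^t\delta(s)s^{-1/2}\,ds$ whose extremal solution is exactly $\delta(t)=a\sqrt t$; a Grönwall-type / self-similar scaling argument ($\delta(t)=\sqrt t\,\phi(\log t)$) then forces $\limsup \delta(t)/\sqrt t\le a$, completing the proof. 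The main obstacle is handling the non-differentiability of $\lambda$ rigorously — the integration by parts and the passage to the differential inequality must be justified for merely Lip(1/2) driving terms, e.g. by approximating $\lambda$ by smooth functions with controlled Lip(1/2) norm and passing to the limit in the (continuous) dependence of solutions of (\ref{Eq5}) on the driving term.
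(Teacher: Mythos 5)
Your reduction to $\delta(t):=h^+(\lambda(0),t)-\lambda(t)$ has a genuine quantitative gap. The integral inequality you propose, $\delta(t)^2\le 4t+c\int_0^t\delta(s)s^{-1/2}\,ds$, has self-similar extremal $\delta(t)=a\sqrt t$ with $a^2-ca-4=0$, i.e.\ $a=(c+\sqrt{c^2+16})/2$; feeding this into your decomposition $h^+(t)-h^+(0)=\delta(t)+(\lambda(t)-\lambda(0))$ only yields $\limsup_{t\to0+}(h^+(t)-h^+(0))/\sqrt t\le a+c$, strictly weaker than the theorem. The sharp constant requires a \emph{joint} estimate: the driving term cannot simultaneously make $\delta$ grow at rate $a$ (which, as your own computation shows, needs $\lambda\sim\lambda(0)-c\sqrt t$) and contribute $+c\sqrt t$ to $h^+$; when $\lambda(t)=\lambda(0)+c\sqrt t$ one gets $\delta\sim(a-c)\sqrt t=(4/a)\sqrt t$, not $a\sqrt t$. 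Your scheme as written never captures this cancellation. There is also an unresolved technical issue you flag yourself: Lip(1/2) gives no pointwise bound $|\lambda'(s)|\le c/(2\sqrt s)$ ($\lambda$ need not be differentiable), and after integrating by parts you are left with $\int_0^t\delta'(s)\lambda(s)\,ds$, where $\delta'$ is equally ill-controlled near $t=0$, so the ``crude bound'' is not justified.

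The paper sidesteps both difficulties by working with the single function $\varphi(t)=h^+(\lambda(0),t)/\sqrt t$, which is $C^1$ for $t>0$ by property (ii) regardless of the regularity of $\lambda$, and satisfies $t\varphi'=2/(\varphi-\lambda/\sqrt t)-\varphi/2$. The sign of $\varphi'$ is governed by the quadratic $\varphi^2-(\lambda/\sqrt t)\varphi-4$, whose positive root is at most $A=(c+\sqrt{c^2+16})/2$ since $|\lambda(t)/\sqrt t|\le c$; assuming $\limsup\varphi=B>A$, one shows $\varphi>B-\epsilon$ on $(0,t^*)$, integrates $dh^+/dt\le 2/(\sqrt t\,(B-\epsilon-c))$, and uses the identity $A(A-c)=4$ to reach $h^+(t)<A\sqrt t$, a contradiction. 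Your sharpness example, once the rates are corrected ($\lambda(t)=\lambda(0)+c\sqrt t$, $h^+=\lambda(0)+A\sqrt t$, $\delta=(4/A)\sqrt t$), coincides with the paper's. To salvage your route you would have to prove the coupled bound $\limsup_{t\to0+}(\delta(t)+\lambda(t)-\lambda(0))/\sqrt t\le a$ directly, which is in essence what the paper's $\varphi$ accomplishes.
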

\begin{proof} Assume without loss of generality that $h^+(\lambda(0),0)=\lambda(0)=0$. Denote
$\varphi(t):=h^+(\lambda(0),t)/\sqrt t$, $t>0$. This function has a continuous derivative and
satisfies the differential equation
$$t\varphi'(t)=\frac{2}{\varphi(t)-\lambda(t)/\sqrt{t}}-\frac{\varphi(t)}{2}.$$ This implies together
with property (iii) that $\varphi'(t)>0$ iff $$\frac{\lambda(t)}{\sqrt
t}<\varphi(t)<\varphi_1(t):=\frac{\lambda(t)}{2\sqrt t}+\sqrt{\frac{\lambda^2(t)}{4t}+4}.$$
Observe that $\varphi_1(t)\leq A:=(c+\sqrt{c^2+16})/2$.

Suppose that $\lim_{t\to0+}\sup\varphi(t)=B>A$, including the case $B=\infty$. Then there
exists $t^*>0$, such that $\varphi(t^*)>B-\epsilon>A$, for a certain $\epsilon>0$. If
$B=\infty$, then replace $B-\epsilon$ by $B'>A$. Therefore, $\varphi'(t^*)<0$ and $\varphi(t)$
increases as $t$ runs from $t^*$ to 0. Thus, $\varphi(t)>B-\epsilon$ for all $t\in(0,t^*)$ and
we obtain from (\ref{Eq5}) that $$\frac{dh^+(\lambda(0),t)}{dt}\leq\frac{2}{\sqrt
t(B-\epsilon-c)},$$ for such $t$. Integrating this inequality we get $$h^+(\lambda(0),t)\leq\frac{4\sqrt t
}{B-\epsilon-c}<\frac{4\sqrt t}{A-c},$$ that contradicts our supposition. This proves the
estimate of Theorem 3.1.

In order to attain the equality sign in Theorem 3.1, one chooses  $\lambda(t)=c\sqrt t$. Then
$h^+(\lambda(0),t)=A\sqrt t$ solves equation (\ref{Eq5}) with singularity at $t=0$. This
completes the proof.
\end{proof}

\begin{remark} Estimates similar to Theorem 3.1 hold for the other singular solution
$h^-(\lambda(0).t)$.
\end{remark}

\begin{remark} Let us compare Theorem 3.1 with the results from Section 2. The image
of a circular arc $\gamma(t)\subset\mathbb H$ tangent to $\mathbb R$ is
$I(t)=[h^-(\lambda(0),t),h^+(\lambda(0),t)]$, where
$h^-(\lambda(0),t)=\alpha(t)=-(9/4\pi)^{1/3}t^{2/3}+\dots$, and
$h^+(\lambda(0),t)=\beta(t)=(12\pi)^{1/3}t^{1/3}+\dots$, so that
$h^-(\lambda(0),t)\in\text{Lip}(2/3)$ and $h^+(\lambda(0),t)\in\text{Lip}(1/3)$.
\end{remark}

\begin{remark} Singular solutions to the differential equation (\ref{Eq5}) appear not only at $t=0$
but at any other moment $\tau>0$. More precisely, there exist two families $h^-(\gamma(\tau),t)$ and
$h^+(\gamma(\tau),t)$, $\tau\geq0$, $t\geq\tau$, of singular solutions to (\ref{Eq5}) that
describe the image of arcs $\gamma(t)$, $t\geq\tau$ under map $h(z,t)$. They correspond to
the initial data $h(\gamma(\tau),\tau)=\lambda(\tau)$ in (\ref{Eq5}) and satisfy the inequalities
$h^-(\gamma(\tau),t)<\lambda(t)<h^+(\gamma(\tau),t)$, $t>\tau$. These two families of singular
solutions have no common inner points and fill in the set $$\{(x,t): h^-(\lambda(0),t)\leq
x\leq h^+(\lambda(0),t), 0\leq t\leq t_0\},$$ for some $t_0$.
\end{remark}

\section{Critical norm values for driving terms}

In this section we discuss the results and techniques of Marshall and Rohde \cite{Marshall}
and Lind \cite{Lind}. The authors of \cite{Marshall} proved the existence of $C_{\mathbb D}$ such
that driving terms $u(t)\in\text{Lip}(1/2)$ with $\|u\|_{1/2}<C_{\mathbb D}$ in (\ref{LE})
generate quasisymmetric slit maps. This result remains true for an absolute number $C_{\mathbb
H}$ in the half-plane version of the L\"owner differential equation (\ref{LE2}), see e.g.
\cite{Lind}.

Lind \cite{Lind} claimed that the disk version (\ref{LE}) of the L\"owner differential equation
is `more challenging', than the half-plane version (\ref{LE2}). Working with the half-plane
version she showed that $C_{\mathbb H}=4$. The key result is based on the fact that if
$\lambda(t)\in\text{Lip}(1/2)$ in (\ref{LE2}), and $h(x,t)=\lambda(t)$, say at $t=1$, then
$\Omega(t)=h(\mathbb H,t)$ is not a slit domain and $\|\lambda\|_{1/2}\geq4$. Moreover, there
is an example of $\lambda(t)=4-4\sqrt{1-t}$ that yields $h(2,1)=\lambda(1)$. Although there
may be more obstacles for generating slit half-planes than that of the driving term $\lambda$
catching up some solution $h$ to (\ref{LE2}), Lind showed that this is basically the only
obstacle. The latter statement was proved by using techniques of \cite{Marshall}.

We will modify here the main Lind's reasonings so that they could be applied
to the disk version of the L\"owner equation. After that it remains to refer to
\cite{Marshall} and \cite{Lind} to state that $C_{\mathbb D}$ also equals 4.

Suppose that slit disks $\Omega(t)$ correspond to $u\in\text{Lip}(1/2)$ in (\ref{LE}) with the
sign `+' in its right-hand side instead of `-'. Then the maps $w(z,t)$ are extended continuously to
$\mathbb T\setminus\{e^{iu(0)}\}$. Let $z_0\in\mathbb T\setminus\{e^{iu(0)}\}$, and
let $\alpha(t,\alpha_0):=\arg w(z_0,t)$ be a solution to the following real-valued initial value
problem
\begin{equation}
\frac{d\alpha(t)}{dt}=\cot\frac{\alpha-u}{2},\;\;\;\alpha(0)=\alpha_0.\label{Eq6}
\end{equation}

Similarly, suppose that slit half-planes $\Omega(t)$ correspond to $\lambda\in\text{Lip}(1/2)$
in (\ref{LE2}) with the sign `+' in its right-hand side instead of `-'. Then the maps $h(z,t)$
are extended continuously to $\mathbb R\setminus\lambda(0)$. Let $x_0\in\mathbb
R\setminus\lambda(0)$ and let $x(t,x_0):=h(x_0,t)$ be a solution to the following real-valued
initial value problem
\begin{equation} \frac{dx(t)}{dt}=\frac{2}{x(t)-\lambda(t)},\;\;\;x(t_0)=x_0.\label{Eq7}
\end{equation}

For all $t\geq0$, $\tan((\alpha(t)-u(t))/2)\neq0$ in (\ref{Eq6}), and $x(t)-\lambda(t)\neq0$
in (\ref{Eq7}) (see \cite{Lind} for the half-plane version). Let us show a connection between the
solutions $\alpha(t)$ to (\ref{Eq6}), and $x(t)$ to (\ref{Eq7}), where the driving terms $u(t)$ and
$\lambda(t)$ correspond to each other.

\begin{lemma} Given $\lambda(t)\in\text{\rm Lip}(1/2)$, there exists $u(t)\in\text{\rm Lip}(1/2)$, such
that equations (\ref{Eq6}) and (\ref{Eq7}) have the same solutions. Conversely, given
$u(t)\in\text{\rm Lip}(1/2)$ there exists $\lambda(t)\in\text{\rm Lip}(1/2)$, such that equations
(\ref{Eq6}) and (\ref{Eq7}) have the same solutions.\end{lemma}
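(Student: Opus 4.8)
The plan is to build the correspondence out of the conformal equivalence between $\mathbb H$ and $\mathbb D$. A time-dependent M\"obius transformation carries the boundary flow (\ref{Eq6}) on $\mathbb T$ to a boundary flow on $\mathbb R$, and with the right normalisation the target flow is exactly (\ref{Eq7}). Concretely, write a general M\"obius map of $\mathbb T$ onto $\mathbb R\cup\{\infty\}$ as
\[
g_t(\alpha)=p(t)+q(t)\cot\frac{\alpha-r(t)}{2},\qquad q(t)\neq 0,
\]
put $x(s)=g_{t(s)}(\alpha(t(s)))$ with an auxiliary monotone time change $s=\sigma(t)$, $\sigma(0)=0$, and differentiate $x$ using (\ref{Eq6}). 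Re-expressing $\cot\frac{\alpha-u}{2}$ through $X:=\cot\frac{\alpha-r}{2}$ one gets $dx/ds$ as a rational function of $X$, and the requirement that it equal $2/(x-\lambda)$ becomes a polynomial identity in $X$.

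The first main step is to solve that identity. Matching the top-order terms forces $\dot r=\cot\frac{r-u}{2}$, i.e. $r(t)$ must itself be a solution of (\ref{Eq6}); writing $m:=\cot\frac{r-u}{2}$, the remaining coefficients then yield explicit linear ODEs $\dot q=-\tfrac12 q(1+m^2)$, $\dot p=\tfrac12 qm(1+m^2)$, and $\sigma'=\tfrac14 q^2(1+m^2)^2>0$, together with the algebraic relation
\[
\lambda(t)=p(t)-q(t)\cot\frac{r(t)-u(t)}{2}=g_t(u(t)).
\]
Thus $\lambda$ is precisely the $g_t$-image of the driving point $e^{iu(t)}$, the constraint $\tan\frac{\alpha-u}{2}\neq0$ goes over into $x-\lambda\neq0$, and the solution curves of (\ref{Eq6}) and (\ref{Eq7}) are identified curve by curve through $(\alpha,t)\mapsto(x,s)$. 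For the converse direction the same relations, read with $\lambda$ prescribed, become a closed ODE system for $(r,p,q)$ whose right-hand side is assembled from $\lambda$ alone (via $m=(p-\lambda)/q$); since $\lambda$ is merely continuous one works with the integral (Picard) form, and as long as $q$ stays away from $0$ — guaranteed by the non-degeneracy hypotheses — this produces $(r,p,q)$, hence $u$ via $\cot\frac{r-u}{2}=(p-\lambda)/q$, on the relevant time interval.

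The main obstacle is the regularity claim: one must check that the driving term produced by this construction is again H\"older-$1/2$, with control of its norm. The trouble is that the formal derivative of $\lambda$ (respectively of $u$) contains $\dot u$ (respectively $\dot\lambda$), which need not exist, so the explicit formula cannot simply be differentiated. Instead I would estimate increments directly: writing $\lambda(t_2)-\lambda(t_1)=[g_{t_2}(u(t_2))-g_{t_2}(u(t_1))]+[g_{t_2}(u(t_1))-g_{t_1}(u(t_1))]$, the second bracket is $O(|t_2-t_1|)$ because $p,q,r$ are $C^1$, while the first equals $g_{t_2}'(\xi)\,(u(t_2)-u(t_1))$ with $|g_{t_2}'(\xi)|\approx \tfrac12|q|(1+m^2)$. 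Passing to the variable $s$ one uses that $\sqrt{\sigma'}=\tfrac12|q|(1+m^2)$ is exactly the modulus of this M\"obius Jacobian, so the two effects cancel and the H\"older-$1/2$ modulus — in particular its constant at the instant where the driving term catches up with a trajectory — is transported unchanged, up to a harmless globally Lipschitz perturbation. This cancellation between the M\"obius Jacobian at the driving point and the time-change factor is the point that has to be gotten right, and it is what allows one afterwards to invoke \cite{Marshall} and \cite{Lind} and conclude that $C_{\mathbb D}=4$.
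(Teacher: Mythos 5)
Your M\"obius--transport computation is internally consistent: matching powers of $X=\cot\frac{\alpha-r}{2}$ does force $\dot r=\cot\frac{r-u}{2}$, and then $\dot q=-\tfrac12 q(1+m^2)$, $\dot p=\tfrac12 qm(1+m^2)$, $\sigma'=\tfrac14 q^2(1+m^2)^2$ and $\lambda=g_t(u)$ all check out. But this proves a different statement from the one the lemma asserts and the paper later uses. Your correspondence involves a genuine time change $s=\sigma(t)$ and a value distortion $g_t$, so solutions of (\ref{Eq6}) and (\ref{Eq7}) are related by $x(\sigma(t))=g_t(\alpha(t))$; they are not \emph{the same} functions. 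What the lemma means, and what is needed afterwards, is literal equality $\alpha(t,\alpha_0)=x(t,\alpha_0)$ at the same time $t$: this is what turns $x(1,2)=\lambda(1)$ into $\alpha(1,2)=u(1)$ at the same instant $t=1$, and what lets one compute $\|u\|_{1/2}=4$ from the explicit formula for $u$ when $\lambda=4-4\sqrt{1-t}$. Under your correspondence both the critical instant and the driving term are reparametrized, and recovering the norm statement would rest on the cancellation $|g_t'(u)|=\sqrt{\sigma'}$, which is an infinitesimal identity at the critical time, not the exact equality of H\"older quotients that the text uses.

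The paper's own proof is a one-line substitution with no conformal geometry: fix one trajectory $x(t,x_0)$ of (\ref{Eq7}) and define $u$ by (\ref{Eq8}), $u(t)=x(t,x_0)-2\arctan\frac{x(t,x_0)-\lambda(t)}{2}$, i.e.\ $\tan\frac{x-u}{2}=\frac{x-\lambda}{2}$. Then $\cot\frac{x-u}{2}=\frac{2}{x-\lambda}$, so the very same function $x(t,x_0)$ solves (\ref{Eq6}) with this $u$ and with no time change; $u$ is Lip$(1/2)$ because $x(t,x_0)$ and $\arctan\frac{x-\lambda}{2}$ are, and the converse direction is the analogous formula (\ref{Eq9}). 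Note that the driving term is tailored to a single trajectory; nothing global is claimed or needed. Besides the mismatch with the statement, your route leaves real analytic gaps: the reference solution $r$ must be chosen so that $m=\cot\frac{r-u}{2}$ stays finite up to and including the critical time (otherwise $q\to0$, $m\to\infty$ and $\lambda=p-qm$ may degenerate), and the Picard argument for $(r,p,q)$ in the converse direction needs an a priori lower bound on $|r-u|$. The construction is geometrically illuminating, but as a proof of this lemma it should be replaced by the direct substitution.
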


\begin{proof} Given $\lambda(t)\in\text{Lip}(1/2)$, denote by $x(t,x_0)$ a solution to the
initial value problem (\ref{Eq7}). Then the solution $\alpha(t,\alpha_0)$ to the initial value
problem (\ref{Eq6}) is equal to $x(t,\alpha_0)$ when
$$\tan\frac{\alpha-u}{2}=\frac{x-\lambda}{2},$$ and
$$x_0=\lambda(0)+2\tan\frac{\alpha_0-u(0)}{2}.$$ The function  $u(t)$ is normalized by choosing
$$u(0)=x_0-\arctan\frac{x_0-\lambda(0)}{2}.$$ This condition makes $\alpha_0$ and $x_0$ equal.
Hence, the first part of Lemma 1 is true if we put
\begin{equation}u(t)=x(t,x_0)-2\arctan\frac{x(t,x_0)-\lambda(t)}{2}.\label{Eq8}\end{equation}
Obviously, (\ref{Eq8}) preserves the $\text{Lip}(1/2)$ property.

Conversely, given $u(t)\in\text{Lip}(1/2)$, a solution $x(t,x_0)$ is equal to
$\alpha(t,\alpha_0)$ when
\begin{equation}\lambda(t)=\alpha(t,\alpha_0)-2\tan\frac{\alpha(t,\alpha_0)-u(t)}{2}.
\label{Eq9}\end{equation} Again (\ref{Eq9}) preserves the $\text{Lip}(1/2)$ property. This
ends the proof.
\end{proof}

Observe that in some extreme cases relations (\ref{Eq8}) or (\ref{Eq9}) preserve not only the
Lipschitz class but also its norm. Lind \cite{Lind} gave an example of the driving term
$\lambda(t)=4-4\sqrt{1-t}$ in (\ref{Eq7}). It is easily verified that $x(t,2)=4-2\sqrt{1-t}$.
If $t=1$, then $x(1,2)=\lambda(1)=4$, and $\lambda$ cannot generate slit half-plane at $t=1$.
This implies that $C_{\mathbb H}\leq4$. Going from (\ref{Eq7}) to (\ref{Eq6}) we use
(\ref{Eq8}) to put
$$u(t)=x(t,2)-2\arctan\frac{x(t,2)-\lambda(t)}{2}=4-2\sqrt{1-t}-2\arctan\sqrt{1-t}.$$ From
Lemma 4.1 we deduce that $\alpha(1,2)=u(1)$. Hence $u$ cannot generate slit disk at $t=1$, and
$C_{\mathbb D}\leq\|u\|_{1/2}$. Since $$\sup_{0\leq
t<1}\frac{u(1)-u(t)}{\sqrt{1-t}}=\sup_{0\leq
t<1}\left(2+2\frac{\arctan\sqrt{1-t}}{\sqrt{1-t}}\right)=4,$$ we have that $\|u\|_{1/2}\leq4$.
It is now an easy exercise to show that $\|u\|_{1/2}=4$. This implies that $C_{\mathbb
D}\leq4$.

\begin{lemma} Let $u\in\text{\rm Lip}(1/2)$ in (\ref{Eq6}) with $u(0)=0$ and $\alpha_0\in(0,\pi)$.
Suppose that $\alpha(t)$ is a solution to (\ref{Eq6}) and $\alpha(1)=u(1)$. Then
$\|u\|_{1/2}\geq4$.\end{lemma}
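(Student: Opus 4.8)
The plan is to reduce the disk statement to the known half-plane inequality $C_{\mathbb H}=4$ via the correspondence of Lemma 4.1. First I would apply the converse direction of Lemma 4.1 to the given $u\in\text{Lip}(1/2)$ with $u(0)=0$: there is a $\lambda\in\text{Lip}(1/2)$, given explicitly by formula (\ref{Eq9}), so that the solution $x(t,x_0)$ of (\ref{Eq7}) coincides with the solution $\alpha(t,\alpha_0)$ of (\ref{Eq6}), where $\alpha_0\in(0,\pi)$ and $x_0$ is the corresponding initial value with $\alpha_0=x_0$. The hypothesis $\alpha(1)=u(1)$ then translates, through the relation $\tan((\alpha-u)/2)=(x-\lambda)/2$, into $x(1,x_0)=\lambda(1)$, i.e. the driving term $\lambda$ catches the solution $x$ at $t=1$. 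By Lind's result (quoted in Section 4), this forces $\|\lambda\|_{1/2}\geq 4$.

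The remaining, and main, work is to show that the normalization map $\lambda\mapsto u$ (equivalently $u\mapsto\lambda$) does not \emph{increase} the $\text{Lip}(1/2)$ norm in this situation, so that $\|u\|_{1/2}\geq\|\lambda\|_{1/2}\geq 4$. Concretely, from (\ref{Eq9}) we have $\lambda(t)-\lambda(s)=\bigl(\alpha(t)-\alpha(s)\bigr)-2\bigl(\tan\tfrac{\alpha(t)-u(t)}{2}-\tan\tfrac{\alpha(s)-u(s)}{2}\bigr)$, and I would estimate the increment $|\lambda(t)-\lambda(s)|$ against $|u(t)-u(s)|$ and $\sqrt{|t-s|}$. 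The key analytic inputs are: (a) the solution $\alpha(t)$ to (\ref{Eq6}) is monotone and, since $\varphi(t):=\alpha(t)-u(t)$ stays in a fixed interval of the form $(0,\pi)$ bounded away from the points where $\cot$ blows up on the relevant time interval (this uses $\alpha_0\in(0,\pi)$, $u(0)=0$, and that $\alpha$ approaches $u$ only as $t\to 1$), one controls the derivative $\alpha'(t)=\cot(\varphi(t)/2)$; (b) $\tan$ has bounded derivative on compact subintervals of $(-\pi/2,\pi/2)$, so the $\tan$-terms are Lipschitz-in-$\varphi$ with a controlled constant. Combining these, one derives $|\lambda(t)-\lambda(s)|\le |u(t)-u(s)| + (\text{const})|t-s|$, and since on the bounded interval $[0,1]$ the extra $|t-s|$ term is dominated by $\sqrt{|t-s|}$ after matching constants, one concludes $\|u\|_{1/2}\ge\|\lambda\|_{1/2}$. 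I expect this comparison of the two half-norms — making the bound go in the right direction and with the right constant near $t=1$, where $\varphi(t)\to 0$ and both $\cot(\varphi/2)$ and the $\arctan$-type correction degenerate — to be the main obstacle; it is exactly the place where the explicit example $u(t)=4-2\sqrt{1-t}-2\arctan\sqrt{1-t}$ shows the two norms agree, so the estimate must be essentially sharp.

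Finally I would assemble the pieces: given the hypotheses, Lemma 4.1 (converse direction) produces $\lambda$; the catching condition $\alpha(1)=u(1)$ becomes $x(1,x_0)=\lambda(1)$; Lind's theorem gives $\|\lambda\|_{1/2}\ge 4$; and the norm comparison above gives $\|u\|_{1/2}\ge\|\lambda\|_{1/2}\ge 4$, which is the claim. An alternative, more self-contained route would bypass Lind and run the Gronwall-type argument of Theorem 3.1 directly in the disk coordinate: assuming $\|u\|_{1/2}<4$, one shows the solution $\alpha(t)$ of (\ref{Eq6}) started at $\alpha_0\in(0,\pi)$ cannot reach $u(1)$ at $t=1$, by bounding $\alpha(t)-u(t)$ away from $0$ using that $\alpha(t)-u(t)<\pi$ and that the ODE $t\psi'=2\cot(\psi/2)+\dots$ (after the substitution $\psi=(\alpha-u)/\sqrt{1-t}$ near $t=1$) has a supersolution when the norm is below the critical value. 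But the reduction via Lemma 4.1 is shorter, so that is the route I would write up.
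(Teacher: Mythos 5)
Your reduction to the half-plane via Lemma 4.1 has a genuine gap at exactly the point you flag as "the main obstacle": the norm comparison $\|u\|_{1/2}\ge\|\lambda\|_{1/2}$ is not just hard to prove, it is false in general, and even your sketched estimate would not yield it. Writing $\theta(t)=\alpha(t)-u(t)$, relation (\ref{Eq9}) gives $\lambda=u-\rho(\theta)$ with $\rho(\theta)=2\tan\frac{\theta}{2}-\theta\ge0$ on $[0,\pi)$; since $\theta(0)=\alpha_0$ may be arbitrarily close to $\pi$, $\lambda(0)=\alpha_0-2\tan\frac{\alpha_0}{2}$ can be arbitrarily negative while $u$ is fixed, so $\|\lambda\|_{1/2}$ can exceed $\|u\|_{1/2}$ by an arbitrary amount. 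Moreover, an estimate of the form $|\lambda(t)-\lambda(s)|\le|u(t)-u(s)|+C|t-s|$ only gives $\|u\|_{1/2}\ge\|\lambda\|_{1/2}-C'$ for some $C'>0$ (an additive error $C|t-s|$ contributes $C\sqrt{|t-s|}\cdot\sqrt{|t-s|}$, i.e.\ a nonvanishing additive term in the half-norm on any fixed scale), which is useless here; and the correction term $\rho(\theta(t))-\rho(\theta(s))$ is in any case only $O(\sqrt{|t-s|})$ rather than $O(|t-s|)$, because $\alpha'=\cot(\theta/2)$ blows up as $t\to1$. What actually survives is only a \emph{local} comparison at $t=1$: since $\theta(t)\le c\sqrt{1-t}$ one has $u(1)-u(t)=\lambda(1)-\lambda(t)-\rho(\theta(t))$ with $\rho(\theta(t))=O((1-t)^{3/2})=o(\sqrt{1-t})$, so you would additionally need the localized form of Lind's theorem ($\limsup_{t\to1^-}(\lambda(1)-\lambda(t))/\sqrt{1-t}\ge4$, not merely $\|\lambda\|_{1/2}\ge4$), which is stronger than the statement you quote as a black box. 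None of this is carried out in the proposal.

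The paper avoids the transfer entirely and instead reruns Lind's iteration directly in the disk coordinate — essentially your undeveloped "alternative route". From $\alpha(1)=u(1)$ and $\|u\|_{1/2}=c$ one gets $0<\alpha(t)-u(t)\le c\sqrt{1-t}$; near $t=1$ the estimate $\tan\frac{c\sqrt{1-t}}{2}<\frac{c\sqrt{1-t}}{2}(1+\epsilon)$ converts (\ref{Eq6}) into $\alpha'(t)>\frac{2}{c\sqrt{1-t}(1+\epsilon)}$, integration improves the bound to $(c-\frac{4}{c(1+\epsilon)})\sqrt{1-t}$, and iterating produces Lind's sequence $g_n$ with zeros $y_n\uparrow4$, forcing $c(1+\epsilon)>y_n$ for all $n$ and hence $c\ge4$. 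The point is that the trigonometric nonlinearity only needs to be linearized \emph{locally at} $t=1$, where $\alpha-u\to0$; no global comparison of the two driving terms is required. If you want to keep your write-up, you should either carry out this direct iteration or prove the localized version of Lind's estimate together with the $o(\sqrt{1-t})$ comparison of increments at $t=1$; as it stands, the chain $\|u\|_{1/2}\ge\|\lambda\|_{1/2}\ge4$ does not hold.
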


\begin{proof} Observe that $\alpha(t)$ is increasing on $[0,1]$, and $\alpha(t)-u(t)>0$ on
$(0,1)$. Let $u\in\text{Lip}(1/2)$ in (3), and $\|u\|_{1/2}=c$. Then, \begin{equation}
\alpha(t)-u(t)\leq\alpha(1)-u(1)+c\sqrt{1-t}=c\sqrt{1-t}.\label{Eq10}\end{equation} Given
$\epsilon>0$, there exists $\delta>0$, such that
$$\tan\frac{c\sqrt{1-t}}{2}<\frac{c\sqrt{1-t}}{2}(1+\epsilon),$$
for $1-\delta<t<1$ and all $0<c\leq4$. We apply this inequality to
(\ref{Eq6}) and obtain that
$$\frac{d\alpha}{dt}\geq\cot\frac{c\sqrt{1-t}}{2}>\frac{2}{c\sqrt{1-t}(1+\epsilon)}.$$
Integrating gives that $$\alpha(1)-\alpha(t)\geq\frac{4\sqrt{1-t}}{c(1+\epsilon)}.$$

This allows us to improve (\ref{Eq10}) to \begin{equation}\alpha(t)-u(t)\leq\alpha(1)-
\frac{4\sqrt{1-t}}{c(1+\epsilon)}-u(1)+c\sqrt{1-t}=\left(c-\frac{4}{c(1+\epsilon)}
\right)\sqrt{1-t}.\label{Eq11}\end{equation} 

Repeating these iterations we get
$$
\alpha(t)-u(t)\leq c_n\sqrt{1-t},
$$
where $c_0=c$, $c_{n+1}=c-4/[(1+\varepsilon)c_n]$, and $c_n>0$.
Let $g_n$ be recursively
defined by (see Lind \cite{Lind}) $$g_1(y)=y-\frac{4}{y},\;\;\;g_n(y)=y-\frac{4}{g_{n-1}(y)},
\;\;\;n\geq2.$$ It is easy to check that $c_n<g_n((1+\varepsilon)c)<(1+\varepsilon)c_n$

Lind \cite{Lind} showed that $g_n(y_n)=0$ for an increasing sequence $\{y_n\}$, and
$g_{n+1}(y)$ is an increasing function from $(y_n,\infty)$ to $\mathbb R$. So
$c(1+\epsilon)>y_n$ for all $n$, and it remains to apply Lind's result \cite{Lind} that
$\lim_{n\to\infty}y_n=4$. Hence, $c\geq4/(1+\epsilon)$. The extremal estimate is obtained if
$\epsilon\to0$ which leads to $c\geq4$. This completes the proof.
\end{proof}

Now Lind's reasonings in \cite{Lind} based on the techniques from \cite{Marshall} give a proof
of the following statement.

\begin{proposition}If $u\in\text{\rm Lip}(1/2)$ with $\|u\|_{1/2}<4$, then the domains $\Omega(t)$
generated by the L\"owner differential equation (\ref{LE}) are disks with quasislits.
\end{proposition}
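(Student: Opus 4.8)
The plan is to transfer the entire argument of Marshall--Rohde \cite{Marshall} and Lind \cite{Lind} from the half-plane to the disk via the correspondence established in Lemma 4.1, reducing Proposition 4.1 to the single quantitative obstruction handled by Lemma 4.2. First I would fix $u\in\text{Lip}(1/2)$ with $\|u\|_{1/2}<4$ and consider the flow $w(z,t)$ generated by the disk L\"owner equation (\ref{LE}) with the sign `+'. The central claim to prove is that the driving term never ``catches'' a boundary trajectory: for every $z_0\in\mathbb T\setminus\{e^{iu(0)}\}$, the solution $\alpha(t,\alpha_0)$ of (\ref{Eq6}) satisfies $\tan((\alpha(t)-u(t))/2)\neq 0$ for all $t\geq0$. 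Suppose not; then there is a first time $\tau>0$ with $\alpha(\tau)=u(\tau)$. Rescaling time to put $\tau=1$ and translating so that $u(0)=0$, and using that $\alpha$ is monotone with $\alpha-u$ of one sign on the relevant interval (the analogue of the first lines of the proof of Lemma 4.2), this puts us exactly in the hypotheses of Lemma 4.2, which forces $\|u\|_{1/2}\geq4$, contradicting our assumption. Hence no collision occurs.

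Next I would argue that the absence of collisions is not merely necessary but sufficient for the slit property, and moreover for the quasislit property. This is where I invoke the techniques of \cite{Marshall}: once one knows that $\alpha(t)-u(t)$ stays bounded away from $0$ in a controlled way — indeed the iteration in Lemma 4.2, run in the non-extremal regime $\|u\|_{1/2}=c<4$, yields a lower bound of the form $\alpha(t)-u(t)\geq \eta(c)\sqrt{t}$ near any candidate collision time, with $\eta(c)>0$ — one gets the uniform modulus-of-continuity and Stolz-angle estimates on the tip $e^{iu(t)}$-image that \cite{Marshall} uses to build the quasiconformal extension. Concretely, I would use Lemma 4.1 to pass from $u$ to the corresponding $\lambda\in\text{Lip}(1/2)$; since (\ref{Eq8})--(\ref{Eq9}) are analytic changes of variable that preserve $\text{Lip}(1/2)$, and since the no-collision property is equivalent on the two sides (again Lemma 4.1), the half-plane flow generated by $\lambda$ produces a quasislit by Lind's theorem, and then the conformal dictionary between (\ref{LE2}) and (\ref{LE}) — a M\"obius change of the space variable composed with a reparametrization — carries the quasislit in $\mathbb H$ to a quasislit in $\mathbb D$, so that $\Omega(t)$ is a disk minus a quasislit.

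The main obstacle I anticipate is not the collision estimate (Lemma 4.2 already packages that) but the bookkeeping in transferring the \emph{positive} conclusion: Lind's proof that ``no collision'' implies ``quasislit'' is itself a reworking of \cite{Marshall}, and one must check that every step of that reworking survives the passage $\lambda\leftrightarrow u$. The delicate points are (a) that the correspondence of Lemma 4.1 is global in $t$, not just local near $t=0$, so that a collision at \emph{any} time on one side corresponds to one on the other; (b) that $\text{Lip}(1/2)$ norms, while not exactly preserved by (\ref{Eq8})--(\ref{Eq9}), stay in the range $<4$ under the correspondence when they start $<4$ — or, more robustly, that only the no-collision property and not the precise norm is needed to feed the Marshall--Rohde machine; and (c) that the local geometry near the growing tip (the Stolz-angle condition) is conformally natural and hence invariant under the M\"obius dictionary. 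Once these three points are verified, the proposition follows by directly citing \cite{Marshall} and \cite{Lind}, exactly as the text preceding the statement indicates.
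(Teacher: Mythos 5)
Your overall skeleton matches the paper's: Lemma 4.2 rules out the driving term catching a boundary trajectory when $\|u\|_{1/2}<4$, and the conversion of ``no collision with norm below the critical value'' into ``quasislit'' is delegated to the machinery of Marshall--Rohde as reworked by Lind (the paper itself gives no more detail than that; its stated proof is essentially the citation). The first half of your argument is fine, with one caveat: Lemma 4.2 is stated for a collision at $t=1$ with $u(0)=0$, and unlike the half-plane setting there is no scaling $h\mapsto r^{-1}h(rz,r^2t)$ available in the disk, so ``rescaling time to put $\tau=1$'' is not innocent. You should instead observe that the proof of Lemma 4.2 is local near the collision time and works verbatim for a collision at any $\tau>0$.

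The genuine gap is in your ``concrete'' implementation of the second half. Lemma 4.1 does not supply a single $\lambda$ whose half-plane hull is a M\"obius image of the disk hull: the driving term produced by (\ref{Eq8}) is built from one particular trajectory $x(t,x_0)$ and changes when $x_0$ changes, so the correspondence is trajectory-by-trajectory and does not identify the two boundary flows (hence the two weldings, hence the two hulls) globally. Moreover, the radial and chordal L\"owner equations are not conjugate under a fixed M\"obius change of the space variable --- there is no ``conformal dictionary'' carrying solutions of (\ref{LE2}) to solutions of (\ref{LE}) --- and, as you yourself flag in point (b), relations (\ref{Eq8})--(\ref{Eq9}) preserve the Lip(1/2) class but not its norm except in extreme cases, so even with a single corresponding $\lambda$ you could not simply invoke Lind's half-plane theorem for it. The route that actually works is the one you mention first and then abandon: rerun Marshall--Rohde's construction (which is native to the disk) with Lind's collision estimate replaced by Lemma 4.2. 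If you want to keep the transfer flavor, you may only transfer statements about individual boundary trajectories, which is all Lemma 4.1 provides.
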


In other words, Proposition 4.1 states that $C_{\mathbb D}=C_{\mathbb H}=4$.

\end{document}